\newtheorem{theorem}{Theorem}[section]
\newtheorem{lemma}[theorem]{Lemma}
\newtheorem{definition}[theorem]{Definition}
\newtheorem{proposition}[theorem]{Proposition}
\newtheorem{remark}[theorem]{Remark}
\newtheorem{examples}[theorem]{Examples}
\newcommand{\R}{{\mathbb{R}}}
\begin{document}

\title{Generalized Orlicz Spaces and Wasserstein Distances for Convex-Concave Scale Functions}

\author{Karl-Theodor Sturm}

\date{}

\maketitle

\begin{abstract}
Given a strictly increasing, continuous function $\vartheta:\R_+\to\R_+$, based on the cost functional
$\int_{X\times X}\vartheta\left(d(x,y)\right)\,d q(x,y)$, we define the $L^\vartheta$-Wasserstein distance $W_\vartheta(\mu,\nu)$ between probability measures $\mu,\nu$ on some metric space $(X,d)$.
The function $\vartheta$ will be assumed to admit a representation $\vartheta=\varphi\circ\psi$ as a composition of a convex and a concave function $\varphi$ and $\psi$, resp.
Besides convex functions and concave functions this includes all $\mathcal C^2$ functions.

For such functions $\vartheta$ we extend the concept of Orlicz spaces, defining the metric space $L^\vartheta(X,m)$ of measurable functions $f: X\to\R$ such that, for instance,
$$d_\vartheta(f,g)\le1\quad\Longleftrightarrow\quad \int_X\vartheta(|f(x)-g(x)|)\,d\mu(x)\le1.$$
\end{abstract}

\section{Convex-Concave Compositions}

Throughout this paper, $\vartheta$ will be a strictly increasing, continuous function from $\mathbb R_+$ to $\mathbb R_+$ with $\vartheta(0)=0$.

\begin{definition}
$\vartheta$ will be called ccc function ("convex-concave composition") iff there exist two
 strictly increasing continuous functions $\varphi,\psi: \mathbb R_+\to\mathbb R_+$ with
$\varphi(0)=\psi(0)=0$
s.t.  $\varphi$ is convex, $\psi$ is concave and
$$\vartheta=\varphi\circ\psi.$$
The pair $(\varphi,\psi)$ will be called convex-concave factorization of $\vartheta$.

The factorization is called minimal (or non-redundant) if for any other factorization $(\tilde\varphi,\tilde\psi)$ the function $\varphi^{-1}\circ \tilde\varphi$ is convex.
\end{definition}

Two minimal factorizations of a given function $\vartheta$ differ only by a linear change of variables. Indeed, if $\varphi^{-1}\circ\tilde\varphi$ is convex and also $\tilde\varphi^{-1}\circ\varphi$ is convex then there exists a $\lambda\in(0,\infty)$ s.t.  $\tilde\varphi(t)=\varphi(\lambda t)$ and  $\tilde\psi(t)=\frac1\lambda\psi(t)$.

\medskip

For each convex, concave or ccc function $f:\mathbb R_+\to\mathbb R_+$ put $f'(t):=f'(t+):=\lim_{h\searrow0}\frac1h\left[f(t+h)-f(t)\right]$.

\begin{lemma}
(i) For any ccc function $\vartheta$, the function $\log\vartheta'$ is locally of bounded variation and the distribution $(\log\vartheta')'$ defines a signed Radon measure on $(0,\infty)$, henceforth denoted by $d(\log\vartheta')$.

(ii)
A pair $(\varphi,\psi)$ of strictly increasing convex or concave, resp., continuous functions with $\varphi(0)=\psi(0)=0$
is a factorization of $\vartheta$ iff
\begin{equation}\label{radon}
d(\log\vartheta')=\psi^{-1}_*d(\log\varphi')+d(\log\psi')
\end{equation}
in the sense of signed Radon measures.

(iii)
The factorization $(\varphi,\psi)$ is minimal iff
for any other factorization $(\tilde\varphi,\tilde\psi)$
$$-d(\log\psi')\le -d(\log\tilde\psi')$$
in the sense of nonnegative Radon measures on $(0,\infty)$.

(iv) Every ccc function $\vartheta$ admits a minimal factorization $(\check\vartheta,\hat\vartheta)$ given by
$\check\vartheta:=\vartheta\circ\hat\vartheta^{-1}$ and
$$\hat\vartheta(x):= \int_0^x\exp\left(-\int_1^y d\nu_-(z)\right)dy$$
where $d\nu_- (z)$ denotes the negative part of the Radon measure $d\nu(z)=d(\log\vartheta')(z)$.
\end{lemma}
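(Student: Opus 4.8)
The plan is to derive all four parts from one computation, the \emph{logarithmic chain rule}. Since $\varphi$ is convex and strictly increasing and $\psi$ is concave and strictly increasing, both have strictly positive, monotone one-sided derivatives; at every $t$ that is a differentiability point of $\psi$ and for which $\psi(t)$ is one of $\varphi$ (all but countably many $t$) one has $\vartheta'(t)=\varphi'(\psi(t))\,\psi'(t)$, hence
$$\log\vartheta'(t)=\log\varphi'(\psi(t))+\log\psi'(t).$$
Here $\log\varphi'$ is nondecreasing and $\log\psi'$ is nonincreasing, so $t\mapsto\log\varphi'(\psi(t))$ is nondecreasing and $\log\vartheta'$ is a difference of two monotone, locally bounded functions. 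This proves (i): $\log\vartheta'$ is locally of bounded variation and its distributional derivative is a signed Radon measure on $(0,\infty)$.

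For (ii) I would first record the change-of-variables identity for Stieltjes measures. As $\psi$ is an increasing homeomorphism onto its range, for $a<b$
$$\log\varphi'(\psi(b))-\log\varphi'(\psi(a))=d(\log\varphi')\bigl((\psi(a),\psi(b)]\bigr)=\bigl(\psi^{-1}_*\,d(\log\varphi')\bigr)\bigl((a,b]\bigr),$$
i.e. $d[\log\varphi'\circ\psi]=\psi^{-1}_*\,d(\log\varphi')$. Differentiating the displayed chain-rule identity then gives \eqref{radon}, which is the forward direction. Conversely, if \eqref{radon} holds for a candidate pair, then $\log\vartheta'$ and $\log(\varphi\circ\psi)'$ have the same distributional derivative, hence differ by an additive constant; using $\vartheta(0)=\varphi(\psi(0))=\varphi(0)=0$ and absorbing the resulting positive multiplicative constant into $\varphi$ (which alters neither its convexity nor $d(\log\varphi')$) recovers $\vartheta=\varphi\circ\psi$, so $(\varphi,\psi)$ is a factorization.

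For (iii) I would apply the same identity to the transition map between two factorizations. If $(\varphi,\psi)$ and $(\tilde\varphi,\tilde\psi)$ both factor $\vartheta$, then $\varphi^{-1}\circ\tilde\varphi=\psi\circ\tilde\psi^{-1}=:u$; from $\psi=u\circ\tilde\psi$ the logarithmic chain rule yields $d(\log\psi')-d(\log\tilde\psi')=\tilde\psi^{-1}_*\,d(\log u')$. Since $\tilde\psi^{-1}_*$ is pushforward under a homeomorphism and hence preserves the sign of a measure, $u$ is convex (equivalently $d(\log u')\ge0$) precisely when $d(\log\psi')\ge d(\log\tilde\psi')$, i.e. $-d(\log\psi')\le-d(\log\tilde\psi')$. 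As minimality means $\varphi^{-1}\circ\tilde\varphi=u$ is convex for every competitor, (iii) follows. For (iv) I would read off the optimal concave factor: in \eqref{radon} the term $\psi^{-1}_*\,d(\log\varphi')$ is nonnegative and $d(\log\psi')$ nonpositive, so comparing with the Jordan decomposition $d\nu=d\nu_+-d\nu_-$ and using that $d\nu_-$ is the \emph{smallest} nonnegative measure whose addition makes $d\nu$ nonnegative, every factorization obeys $-d(\log\psi')\ge d\nu_-$. Choosing $\hat\vartheta$ with $d(\log\hat\vartheta')=-d\nu_-$ and $\hat\vartheta'(1)=1$, $\hat\vartheta(0)=0$ integrates to exactly the stated formula, and $\hat\vartheta$ is concave since $\hat\vartheta'=\exp(-\int_1^{\,\cdot}d\nu_-)$ is nonincreasing. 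Writing $\check\vartheta:=\vartheta\circ\hat\vartheta^{-1}$, a direct computation shows that $\log\check\vartheta'$, as a function of $y=\hat\vartheta^{-1}(x)$, equals $\log\vartheta'(y)-\log\hat\vartheta'(y)$, whose derivative measure is $d\nu+d\nu_-=d\nu_+\ge0$; thus $\check\vartheta$ is convex and $(\check\vartheta,\hat\vartheta)$ is a genuine factorization attaining the lower bound $d\nu_-$, hence minimal by (iii).

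The step I expect to be most delicate is the measure-theoretic bookkeeping behind (i)--(ii): justifying the pointwise chain rule with only one-sided derivatives, checking that the exceptional countable set does not disturb the identification of the possibly atomic or singular-continuous measures, and validating $d[\log\varphi'\circ\psi]=\psi^{-1}_*\,d(\log\varphi')$ across the full range of $\psi$, including its endpoints. Once this machinery is in place, parts (iii) and (iv) become essentially formal, the only substantive input being the extremal characterization of the Jordan negative part.
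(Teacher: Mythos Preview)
Your proposal is correct and follows essentially the same route as the paper: the logarithmic chain rule for (i)--(ii), the transition map $u=\psi\circ\tilde\psi^{-1}$ for (iii), and the Jordan/Hahn decomposition together with the comparison $d\nu_-\le -d(\log\psi')$ for (iv). The one point the paper makes explicit that you pass over is the finiteness of $\hat\vartheta(x)=\int_0^x\exp\bigl(\int_y^1 d\nu_-\bigr)\,dy$ near $0$ (the integrand could a priori blow up), but this follows at once from your inequality $d\nu_-\le -d(\log\psi')$ by comparison with $\psi'$ and hence $\psi$.
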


\begin{proof} (i), (ii):\
The chain rule for convex/concave functions yields
$$\vartheta'(t)=\varphi'(\psi(t))\cdot\psi'(t)$$
for each factorization $(\varphi,\psi)$ of a ccc function $\vartheta$.
Taking logarithms it implies that $\log\vartheta'$ locally is a BV function (as a difference of two increasing functions) and, hence, that the associated Radon measures satisfy
\begin{eqnarray*}
d(\log\vartheta')&=&d(\log\varphi'\circ\psi)+d(\log\psi')\\
&=&\psi^{-1}_*d(\log\varphi')+d(\log\psi').
\end{eqnarray*}

(iii): \
The factorization $(\varphi,\psi)$ is minimal if and only if for any other factorization $(\tilde\varphi,\tilde\psi)$ the function $u=\varphi^{-1}\circ\varphi=\psi\circ\tilde\psi^{-1}$ is convex.
Since $\log\psi'=\log u'(\tilde\psi)+\log\tilde\psi'$, the latter is equivalent to
$$d(\log\psi')\ge d(\log\tilde\psi')$$
which is the claim.

(iv): \
Define $\hat\vartheta$ as above. It remains to verify that $\hat\vartheta<\infty$.
Let $(\varphi,\psi)$ be any convex-concave factorization of $\vartheta$. Without restriction assume $\psi'(1)=1$.
Then the Hahn decomposition of (\ref{radon}) yields
\begin{equation}\label{hahn}
d\nu_- \le -d(\log\psi').
\end{equation}
Hence, for all $0\le x \le 1$
\begin{eqnarray*}
0\le\hat\vartheta(x)&=&
\int_0^x\exp\left(\int_y^1 d\nu_- (z)\right)dy\\
&\le&
\int_0^x\exp\left(-\int_y^1 d(\log\psi')(z)\right)dy
=\psi(x)<\infty.
\end{eqnarray*}
This already implies that $\hat\vartheta$ is finite, strictly increasing and continuous on $[0,\infty)$. (For instance, for $x>1$ it follows $\hat\vartheta(x)\le \hat\vartheta(1)+x-1$.) Moreover, one easily verifies that $\hat\vartheta$ is concave.

Since $\nu_+, \nu_-$ are the minimal nonnegative measures in the ('Hahn' or 'Jordan') decomposition of $\nu=\nu_+-\nu_-$, it follows that $(\check\vartheta,\hat\vartheta)$ is a minimal cc decomposition of $\vartheta$.
\end{proof}

\begin{examples}
\begin{itemize}
\item Each convex function $\vartheta$  is a ccc function. A minimal factorization is given by $(\vartheta,Id)$.
\item Each concave function $\vartheta$  is a ccc function. A minimal factorization is given by $(Id,\vartheta)$.
\item Each $\mathcal C^2$ function $\vartheta$ with $\vartheta'(0+)>0$ is a ccc function. The minimal factorization
is given by
$$\hat\vartheta(x):=\int_0^x\exp\left(\int_1^y\frac{\vartheta''(z)\wedge 0}{\vartheta'(z)}dz\right)dy$$
and $\check\vartheta:=\vartheta\circ\hat\vartheta^{-1}$.
(The condition $\vartheta'(0+)>0$ can be replaced by the strictly weaker requirement that the previous integral defining $\hat\vartheta$ is finite.)
\end{itemize}
\end{examples}

\section{The Metric Space $L^\vartheta(X,\mu)$}

Let $(X,\Xi, \mu)$ be a $\sigma$-finite measure space and $(\varphi,\psi)$ a minimal ccc factorization of a given function $\vartheta$.
Then $L^\vartheta(X,\mu)$ will denote the space of all
measurable functions $f : X\to\mathbb R$ such that
$$\int_X \varphi\left(\frac1t\psi(|f|)\right)\, d\mu < \infty$$
for some $t\in(0,\infty)$
where as usual functions which agree almost everywhere are identified.
Note that -- due to the fact that $r\mapsto\varphi(r)$ for large $r$ grows at least linearly -- the previous condition is equivalent to the condition
$\int_X \varphi\left(\frac1t\psi(|f|)\right)\, d\mu \le1$
for some $t\in(0,\infty)$.

\begin{theorem}
$L^\vartheta(X,\mu)$ is a complete metric space with the metric
$$d_\vartheta(f,g)=\inf\left\{t\in(0,\infty): \ \int_X \varphi\left(\frac1t\psi(|f-g|)\right)\, d\mu\le1\right\}.$$
\end{theorem}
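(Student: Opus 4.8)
The plan is to check the metric axioms and then completeness, treating the triangle inequality as the core step, since this is precisely where the convex--concave factorization $\vartheta=\varphi\circ\psi$ is exploited. Write $F_t(h):=\int_X\varphi\bigl(\tfrac1t\psi(h)\bigr)\,d\mu$ for measurable $h\ge0$, so that $d_\vartheta(f,g)=\inf\{t>0:F_t(|f-g|)\le1\}$. Because $\varphi$ and $\psi$ are increasing, the map $t\mapsto F_t(h)$ is nonincreasing, and by monotone convergence $F_t(h)$ increases to $F_{t_0}(h)$ as $t\searrow t_0$; hence $\{t:F_t(h)\le1\}$ is a closed half-line and the defining infimum is attained, i.e.\ $F_{d_\vartheta(f,g)}(|f-g|)\le1$. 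I will use this attainment fact throughout. Symmetry of $d_\vartheta$ is immediate from $|f-g|=|g-f|$. For nondegeneracy, if $f=g$ a.e.\ then $F_t(|f-g|)=0$ for every $t$, so $d_\vartheta=0$; conversely, if $d_\vartheta(f,g)=0$ then $F_t(|f-g|)\le1$ for all $t>0$, and letting $t\searrow0$ forces $\psi(|f-g|)=0$ a.e.\ by monotone convergence, since $\varphi(r)\to\infty$ as $r\to\infty$ (recall $\varphi$ grows at least linearly), whence $|f-g|=0$ a.e.

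The decisive structural input is that the concave function $\psi$ with $\psi(0)=0$ is subadditive: from concavity through the origin one has $\psi(s)\ge\tfrac{s}{s+t}\psi(s+t)$ and $\psi(t)\ge\tfrac{t}{s+t}\psi(s+t)$, and adding gives $\psi(s+t)\le\psi(s)+\psi(t)$. Together with $|f-g|\le|f-k|+|k-g|$ and monotonicity of $\psi$ this yields
$$\psi(|f-g|)\le\psi(|f-k|)+\psi(|k-g|)\qquad\text{a.e.}$$
Once the inner concave term has been linearized in this way, the convexity of $\varphi$ runs the classical Luxemburg argument. Put $a:=d_\vartheta(f,k)$ and $b:=d_\vartheta(k,g)$, which we may assume lie in $(0,\infty)$ (degenerate cases being trivial). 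By attainment, $F_a(|f-k|)\le1$ and $F_b(|k-g|)\le1$. Writing the argument of $\varphi$ as the convex combination
$$\frac{\psi(|f-k|)+\psi(|k-g|)}{a+b}=\frac{a}{a+b}\cdot\frac{\psi(|f-k|)}{a}+\frac{b}{a+b}\cdot\frac{\psi(|k-g|)}{b},$$
using monotonicity of $\varphi$ on the subadditivity bound, then convexity of $\varphi$, and integrating, I obtain $F_{a+b}(|f-g|)\le\tfrac{a}{a+b}+\tfrac{b}{a+b}=1$, hence $d_\vartheta(f,g)\le a+b$. The same convex-combination estimate applied to $\psi(|f-g|)\le\psi(|f|)+\psi(|g|)$ shows, for $f,g\in L^\vartheta$, that $d_\vartheta(f,g)<\infty$ (choose a common large $s$ with $F_s(|f|),F_s(|g|)\le1$ and take $t=2s$), so $d_\vartheta$ is a genuine finite-valued metric.

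For completeness the first step is a Chebyshev-type estimate linking $d_\vartheta$ to convergence in measure. At $t=d_\vartheta(f,g)$ the bound $F_t(|f-g|)\le1$ together with $\psi$ strictly increasing gives, for every $\lambda>0$,
$$\mu\bigl(\{|f-g|>\lambda\}\bigr)\le\frac{1}{\varphi(\psi(\lambda)/t)},$$
so $d_\vartheta(f_n,f)\to0$ implies $f_n\to f$ in measure. Now let $(f_n)$ be $d_\vartheta$-Cauchy and extract a subsequence with $d_\vartheta(f_{n_{k+1}},f_{n_k})\le2^{-k}$. The estimate above, the at-least-linear growth of $\varphi$, and Borel--Cantelli give that $(f_{n_k})$ converges a.e.\ to some measurable $f$. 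To identify $f$ as the limit in $L^\vartheta$, fix $\varepsilon>0$; for $k$ large and all $l>k$ one has $F_\varepsilon(|f_{n_k}-f_{n_l}|)\le1$, and letting $l\to\infty$, Fatou's lemma (using continuity of $\varphi,\psi$) yields $F_\varepsilon(|f_{n_k}-f|)\le1$, i.e.\ $d_\vartheta(f_{n_k},f)\le\varepsilon$. Thus $f_{n_k}\to f$ in $d_\vartheta$, in particular $d_\vartheta(f,0)<\infty$ so $f\in L^\vartheta$; finally a Cauchy sequence possessing a convergent subsequence converges to the same limit, so $f_n\to f$ and $L^\vartheta(X,\mu)$ is complete.

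I expect the triangle inequality to be the main obstacle, because it is the only place where both halves of the factorization must cooperate: the subadditivity of the concave inner function $\psi$ is needed to reduce to a sum of the two separate deviations, and only then can the convexity of the outer function $\varphi$ deliver the Luxemburg normalization. The remaining ingredients (attainment of the infimum, nondegeneracy, the Chebyshev bound, and the Fatou/Borel--Cantelli completeness scheme) are standard measure-theoretic routine.
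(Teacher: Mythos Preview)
Your argument is correct, and the triangle inequality is handled exactly as in the paper: subadditivity of the concave $\psi$ followed by the convex-combination (Luxemburg) trick for $\varphi$. The differences lie in the auxiliary parts. For nondegeneracy the paper localizes to finite-measure pieces $X_k$ and applies Jensen's inequality to force $\int_{X_k}|\psi(f)-\psi(g)|\,d\mu=0$, whereas you let $t\searrow0$ and use monotone convergence together with $\varphi(r)\to\infty$; your route is shorter. For completeness the paper again exhausts $X$ by sets of finite measure, uses Jensen to show $(\psi(f_n))_n$ is Cauchy in $L^1(X_k,\mu)$, and extracts an a.e.\ convergent subsequence there; you instead derive the global Chebyshev bound $\mu(|f-g|>\lambda)\le 1/\varphi(\psi(\lambda)/t)$ and run Borel--Cantelli along a fast subsequence, which avoids the exhaustion altogether and works uniformly on the whole space. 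Both schemes feed into the same Fatou step. Your additional observation that the infimum defining $d_\vartheta$ is attained is a nice sharpening (the paper works instead with $r,s$ strictly above the respective distances), and your explicit verification that $d_\vartheta(f,g)<\infty$ for $f,g\in L^\vartheta$ fills in a point the paper leaves implicit.
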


The definition of this metric does not depend on the choice of the minimal ccc factorization of the function $\vartheta$. However, choosing an arbitrary convex-concave factorization of $\vartheta$ might change the value of $d_\vartheta$.

Note that always $d_\vartheta(f,g)=d_\vartheta(f-g,0)$.

\begin{proof}
Let $f,g,h\in L^\vartheta(X,\mu)$ be given and choose $r,s>0$ with $d_\vartheta(f,g)<r$ and $d_\vartheta(g,h)<s$. The latter implies
$$\int_X \varphi\left(\frac1r\psi(|f-g|)\right)\, d\mu\le1, \qquad
\int_X \varphi\left(\frac1s\psi(|g-h|)\right)\, d\mu\le1.$$
Concavity of $\psi$ yields $\psi(|f-h|)\le\psi(|f-g|)+\psi(|g-h|)$.
Put $t=r+s$. Then convexity of $\varphi$ implies
$$\varphi\left(\frac1t\psi(|f-h|)\right)\le\varphi\left(\frac rt\cdot \frac{\psi(|f-g|)}r+\frac st\cdot \frac{\psi(|g-h|)}s\right)
\le\frac rt\cdot \varphi\left(\frac{\psi(|f-g|)}r\right)+\frac st\cdot \varphi\left(\frac{\psi(|g-h|)}s\right).$$
Hence,
$$\int_X\varphi\left(\frac1t\psi(|f-h|)\right)d\mu\le
\frac rt\cdot \int_X\varphi\left(\frac{\psi(|f-g|)}r\right)d\mu+\frac st\cdot\int_X \varphi\left(\frac{\psi(|g-h|)}s\right)d\mu\le\frac rt\cdot1+\frac st\cdot1=1$$
and thus
$d_\vartheta(f,h)\le t$. This proves that $d_\vartheta(f,h)\le d_\vartheta(f,g)+ d_\vartheta(g,h)$.

In order to prove the completeness of the metric, let $(f_n)_n$ be a Cauchy sequence in $L^\vartheta$. Then $d_\vartheta(f_n,f_m)< \epsilon_n$ for all $n,m$ with $m\ge n$ and suitable $\epsilon_n\searrow0$. Choose  an increasing sequence of measurable sets $X_k$, $k\in\mathbb N$, with $\mu(X_k)<\infty$ and $\cup_k X_k=X$.
Then
$$\int_{X_k} \varphi\left(\frac1{\epsilon_n}\psi(|f_n-f_m|)\right)\, d\mu \le1$$
for all $k,m,n$ with $m\ge n$.
Jensen's inequality implies
$$\varphi\left(\frac1{\mu(X_k)}\int_{X_k} \frac1{\epsilon_n}\psi(|f_n-f_m|)\,d\mu\right) \le\frac1{\mu(X_k)}$$
and thus
$$\int_{X_k} \left|\psi(f_n)-\psi(f_m)\right|\,d\mu \le\epsilon_n\cdot \mu(X_k)\cdot\varphi^{-1}\left(\frac1{\mu(X_k)}\right).$$
In other words,
$(\psi(f_n))_n$ is a Cauchy sequence in $L^1(X_k,\mu)$. It follows that it has a subsequence $(\psi({f_n}_i))_i$ which converges $\mu$-almost everywhere on $X_k$.
In particular, $({f_n}_i)_i$ converges almost everywhere on $X_k$ towards some limiting function $f$ (which easily is shown to be independent of $k$).

Finally, Fatou's lemma now implies
$$\int_{X_k} \varphi\left(\frac1{\epsilon_n}\psi(|f_n-f|)\right)\, d\mu \le\liminf_{m\to\infty}\int_{X_k} \varphi\left(\frac1{\epsilon_n}\psi(|f_n-f_m|)\right)\, d\mu \le1$$
for each $k$ and $n\in\mathbb N$. Hence,
$$\int_{X} \varphi\left(\frac1{\epsilon_n}\psi(|f_n-f|)\right)\, d\mu \le1,$$
that is,
$$d_\vartheta(f_n,f)\le\epsilon_n$$
which proves the claim.

Finally, it remains to verify that
$$d_\vartheta(f,g)=0\quad\Longleftrightarrow\quad f=g\ \mu\mbox{-a.e. on }X.$$
The implication $\Leftarrow$ is trivial. For the reverse implication, we may argue as in the previous completeness proof: $d_\vartheta(f,g)=0$ will yield
$\int_{X_k} \varphi\left(\frac1{t}\psi(|f-g|)\right)\, d\mu \le1$ for all $k\in\mathbb N$ and all $t>0$ which in turn implies
$\int_{X_k} \left|\psi(f)-\psi(g)\right|\,d\mu =0$.
The latter proves $f=g$ $\mu$-a.e. on $X$ which is the claim.
\end{proof}

\begin{examples} If $\vartheta(r)=r^p$ for some $p\in(0,\infty)$ then
$$d_\vartheta(f,g)=\left(\int_X |f-g|^p\,d\mu\right)^{1/p^*}$$
with $p^*:=p$ if $p\ge 1$ and $p^*:=1$ if $p\le1$.
\end{examples}

\begin{proposition}
\begin{itemize}
\item[(i)] If $\vartheta$ is convex then $\|f\|_{L^\vartheta(X,\mu)}:=d_\vartheta(f,0)$ is indeed a norm and $L^\vartheta(X,\mu)$ is a Banach space, called  Orlicz space. The norm is called Luxemburg norm.

\item[(ii)] If $\vartheta$ is concave then $$d_\vartheta(f,g)=\int_X \vartheta(|f-g|)\,d\mu\ge \|\vartheta(f)-\vartheta(g)\|_{L^1(X,\mu)}.$$

 \item[(iii)] For general ccc function $\vartheta=\varphi\circ\psi$
    $$d_\vartheta(f,g)=\|\psi(|f-g|)\|_{L^\varphi(X,\mu)}.$$
\item[(iv)] If $\mu(M)=1$ then for each strictly increasing, convex function $\Phi:\mathbb R_+\to\mathbb R_+$ with $\Phi^{-1}(1)=1$
$$d_{\Phi\circ\vartheta}(f,g)\ge d_\vartheta(f,g)$$
("Jensen's inequality").
\end{itemize}
\end{proposition}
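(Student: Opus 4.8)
The plan is to treat the four assertions separately, each time expanding the defining infimum for the relevant minimal factorization. For (iii) and (i) I would start from the remark that a convex $\varphi$ has minimal factorization $(\varphi,\mathrm{Id})$, so that the Luxemburg norm of a nonnegative $h$ is literally $\|h\|_{L^\varphi}=\inf\{t>0:\int_X\varphi(h/t)\,d\mu\le1\}$. Substituting $h=\psi(|f-g|)\ge0$ turns the right-hand side into the defining formula for $d_\vartheta(f,g)$, so (iii) is merely a comparison of the two definitions. For (i), where in addition $\psi=\mathrm{Id}$, the functional $d_\vartheta(f,0)$ is the classical Luxemburg functional; the one genuinely new axiom is absolute homogeneity, which I would get from the substitution $s=t/|\lambda|$ in the infimum, giving $d_\vartheta(\lambda f,0)=|\lambda|\,d_\vartheta(f,0)$. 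Positive definiteness is exactly the equivalence $d_\vartheta(f,g)=0\Leftrightarrow f=g$ already proved in the Theorem, and the norm triangle inequality $\|u+v\|\le\|u\|+\|v\|$ follows by applying the metric triangle inequality to the triple $(u,0,-v)$ together with homogeneity. Since $\psi=\mathrm{Id}$ makes $d_\vartheta$ translation invariant, $\|f-g\|=d_\vartheta(f,g)$, so completeness as a Banach space is inherited from the completeness of the metric.

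For (ii) I would use that a concave $\vartheta$ has minimal factorization $(\mathrm{Id},\vartheta)$. Then the condition $\int_X\frac1t\vartheta(|f-g|)\,d\mu\le1$ reads $t\ge\int_X\vartheta(|f-g|)\,d\mu$, so the infimum is attained and equals $\int_X\vartheta(|f-g|)\,d\mu$, which is the asserted equality. For the lower bound I would invoke that an increasing concave function with $\vartheta(0)=0$ is subadditive; combined with $\big||f|-|g|\big|\le|f-g|$ and monotonicity this gives the pointwise estimate $\big|\vartheta(|f|)-\vartheta(|g|)\big|\le\vartheta(|f-g|)$, and integrating over $X$ yields the claimed $L^1$-inequality.

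For (iv) the mechanism is Jensen's inequality. Since $\Phi$ is convex and increasing, $\Phi\circ\varphi$ is convex, so $\Phi\circ\vartheta=(\Phi\circ\varphi)\circ\psi$ is again a convex–concave factorization, and $d_{\Phi\circ\vartheta}$ is read through this factorization inherited from $\vartheta$, i.e. as $\inf\{t:\int_X\Phi(\varphi(\frac1t\psi(|f-g|)))\,d\mu\le1\}$ (equivalently $\|\psi(|f-g|)\|_{L^{\Phi\circ\varphi}}$). With this understood, suppose $t$ is admissible for $\Phi\circ\vartheta$. Writing $w=\varphi(\frac1t\psi(|f-g|))\ge0$ and using that $\mu$ is a probability measure and $\Phi$ convex, Jensen gives $\Phi(\int_X w\,d\mu)\le\int_X\Phi(w)\,d\mu\le1=\Phi(1)$, whence $\int_X w\,d\mu\le1$ by strict monotonicity of $\Phi$; that is, $t$ is also admissible for $\vartheta$. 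Passing to the infimum yields $d_\vartheta(f,g)\le d_{\Phi\circ\vartheta}(f,g)$.

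The step I expect to be the main obstacle is not the estimate itself but the bookkeeping of factorizations behind it. The argument genuinely uses the inherited factorization $(\Phi\circ\varphi,\psi)$, in which the concave factor $\psi$ is kept fixed and only the convex factor is enlarged to $\Phi\circ\varphi$; the inequality is tied to this choice and need not persist if $d_{\Phi\circ\vartheta}$ were instead computed through a minimal factorization of $\Phi\circ\vartheta$ (which may be strictly less concave). I would therefore make the convention explicit at the outset of the proof of (iv), after which everything reduces to the one-line Jensen computation above.
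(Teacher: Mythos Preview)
Your proposal is correct and follows essentially the same route as the paper, which is considerably terser: for (i) it only records the homogeneity $d_\vartheta(tf,0)=t\,d_\vartheta(f,0)$ and defers to the Orlicz literature, for (ii) it states the pointwise estimate $\vartheta(|f-g|)\ge|\vartheta(f)-\vartheta(g)|$ without further comment, it omits (iii) entirely, and for (iv) it runs exactly your Jensen computation. Your caution about the factorization bookkeeping in (iv) is well placed---the paper silently uses the inherited factorization $(\Phi\circ\varphi,\psi)$ without checking minimality, so making that convention explicit, as you propose, is an improvement.
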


\begin{proof}
(i) If $\psi(r)=cr$ then obviously $d_\vartheta(tf,0)=t\cdot d_\vartheta(f,0)$. See also standard literature \cite{Orlicz}.

(ii) Concavity of $\vartheta$ implies $\vartheta(|f-g|)\ge |\vartheta(f)-\vartheta(g)|$.

(iv) Assume that $d_{\Phi\circ\vartheta}(f,g)< t$ for some $t\in(0,\infty)$. It implies
$$\int_X \Phi\left(\varphi\left(\frac1t\psi(|f-g|)\right)\right)\, d\mu \le1.$$
Classical Jensen inequality for integrals yields
$$\Phi\left(\int_X \varphi\left(\frac1t\psi(|f-g|)\right)\, d\mu\right) \le1$$
which -- due to the fact that $\Phi^{-1}(1)=1$ -- in turn implies
$d_\vartheta(f,g)\le t$.
\end{proof}

\section{The $L^\vartheta$-Wasserstein Space}

Let $(X,d)$ be a complete separable metric space and $\vartheta$ a ccc function with minimal factorization $(\varphi,\psi)$.
The \emph{$L^\vartheta$-Wasserstein space} $\mathcal P_\vartheta(X)$ is defined as the space of all probability measures $\mu$ on $X$ -- equipped with its Borel $\sigma$-field --
s.t.
$$\int_X \varphi\left(\frac1t\psi(d(x,y))\right)\, d\mu(x) < \infty$$
for some $y\in X$ and some $t\in(0,\infty)$. The $L^\vartheta$-Wasserstein distance of two probability measures $\mu,\nu\in \mathcal P_\vartheta(X)$ is defined as
\begin{eqnarray*}
W_\vartheta(\mu,\nu)&=&\inf\left\{t>0: \ \inf_{q\in\Pi(\mu,\nu)}\int_{X\times X} \varphi\left(\frac1t\psi(d(x,y))\right)\, dq(x,y)\le 1\right\}
\end{eqnarray*}
where $\Pi(\mu,\nu)$ denotes the set of all couplings of $\mu$ and $\nu$, i.e. the set of all probability measures $q$ on $X\times X$ s.t. $q(A\times X)=\mu(A), q(X\times A)=\nu(A)$ for all Borel sets $A\subset X$.

Given two probability measures $\mu,\nu\in \mathcal P_\vartheta(X)$, a coupling $q$ of them is called \emph{optimal} iff
$$\int_{X\times X} \varphi\left(\frac1w\psi(d(x,y))\right)\, dq(x,y)\le 1$$
for $w:=W_\vartheta(\mu,\nu)$.
\begin{proposition}
For each pair of probability measures $\mu,\nu\in \mathcal P_\vartheta(X)$ there exists an optimal coupling $q$.
\end{proposition}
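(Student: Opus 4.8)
The plan is to realize the optimal coupling as a weak limit of near-optimal couplings, using compactness of $\Pi(\mu,\nu)$ together with lower semicontinuity of the cost. Write $w:=W_\vartheta(\mu,\nu)$ and, for $t>0$, set $F_t(q):=\int_{X\times X}\varphi\left(\frac1t\psi(d(x,y))\right)dq(x,y)$. Its integrand is nonnegative and continuous on $X\times X$, and for fixed $q$ the map $t\mapsto F_t(q)$ is nonincreasing, since $\varphi,\psi$ are increasing and $\frac1t$ decreases. Consequently the set $\{t>0:\inf_{q\in\Pi(\mu,\nu)}F_t(q)\le1\}$ is upward closed, so that $\inf_{q}F_t(q)\le1$ holds for \emph{every} $t>w$. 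The degenerate case $w=0$ forces $\mu=\nu$ and is settled at once by the diagonal coupling, on which the cost vanishes; so I may assume $w>0$.

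Next I would record two structural facts. First, $\Pi(\mu,\nu)$ is weakly sequentially compact: it is tight, since for compact $K,K'\subset X$ with $\mu(X\setminus K),\nu(X\setminus K')$ small one has $q\big((K\times K')^c\big)\le\mu(X\setminus K)+\nu(X\setminus K')$ for every coupling $q$, and tightness of $\mu,\nu$ on the Polish space $X$ makes the right-hand side small; it is also weakly closed, because both marginals pass to weak limits. Prokhorov's theorem then gives relative compactness. Second, for each fixed $t$ the functional $q\mapsto F_t(q)$ is weakly lower semicontinuous, since its integrand is nonnegative and continuous (approximate it from below by the bounded continuous functions $\big(\varphi(\tfrac1t\psi(d))\big)\wedge k$ and apply monotone convergence).

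With these in hand I would choose $t_n\searrow w$ and, using $\inf_q F_{t_n}(q)\le1$, pick couplings $q_n\in\Pi(\mu,\nu)$ with $F_{t_n}(q_n)\le1+\tfrac1n$. By compactness a subsequence converges weakly to some $q\in\Pi(\mu,\nu)$. For a fixed index $m$ and all $n\ge m$, monotonicity in $t$ gives $F_{t_m}(q_n)\le F_{t_n}(q_n)\le1+\tfrac1n$, whence lower semicontinuity yields $F_{t_m}(q)\le\liminf_{n}F_{t_m}(q_n)\le1$. Letting $m\to\infty$, the monotone convergence theorem (as $\tfrac1{t_m}\psi(d)\nearrow\tfrac1w\psi(d)$ and $\varphi$ is increasing and continuous) gives $F_w(q)=\lim_m F_{t_m}(q)\le1$, which is exactly the defining inequality for optimality of $q$.

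The main obstacle I anticipate is the interchange of the two limiting procedures: weak lower semicontinuity is available only for a fixed value of the scaling parameter, whereas the optimal value $w$ is reached only in the limit $t\searrow w$. The device that makes the argument close is the monotonicity of $t\mapsto F_t(q)$, which lets me first apply lower semicontinuity at each fixed $t_m$ and only afterwards increase $t_m$ up to $w$ by monotone convergence. Besides this, the degenerate case $w=0$ deserves the brief separate treatment indicated above; the remaining ingredients (tightness, weak closedness, lower semicontinuity) are routine.
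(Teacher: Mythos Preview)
Your proof is correct and follows essentially the same approach as the paper's: select near-optimal couplings at scales $t_n\searrow w$, extract a weakly convergent subsequence via tightness of $\Pi(\mu,\nu)$, apply lower semicontinuity of the cost at each fixed scale, and then pass to $t\searrow w$ by monotone convergence. The paper simply cites Villani's lemmas for the tightness and lower semicontinuity steps where you give direct arguments, and it does not single out the degenerate case $w=0$.
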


\begin{proof} For $t\in(0,\infty)$ define the cost function $c_t(x,y)=\varphi(\frac1t\psi(d(x,y)))$. Note that $t\mapsto c_t(x,y)$ is continuous and decreasing.

Given $\mu,\nu$ s.t. $w:=W_\vartheta(\mu,\nu)<\infty$. Then for all $t>w$ the measures $\mu$ and $\nu$ have finite $c_t$-transportation costs. More precisely,
$$\inf_{q\in\Pi(\mu,\nu)}\int_{X\times X} c_t(x,y)\, dq(x,y)\le 1.$$
Hence, there exists $q_n\in\Pi(\mu,\nu)$ s.t.
$$\int_{X\times X} c_{w+\frac1n}(x,y)\, dq_n(x,y)\le 1+\frac1n.$$
In particular, $\int_{X\times X} c_{w+1}(x,y))\, dq_n(x,y)\le 2$
for all $n\in\mathbb N$.
Hence, the family $(q_n)_n$ is tight (\cite{Villani2}, Lemma 4.4).
Therefore, there exists a  converging subsequence $\left({q_n}_k\right)_k$
with limit $q\in\Pi(\mu,\nu)$
satisfying
$$\int_{X\times X} c_{w+\frac1n}(x,y)\, dq(x,y)\le 1+\frac1n$$
for all $n$ (\cite{Villani2}, Lemma 4.3) and thus
$$\int_{X\times X} c_{w}(x,y)\, dq(x,y)\le 1.$$
\end{proof}

\begin{proposition}
$W_\vartheta$ is a complete metric on $\mathcal P_\vartheta(X)$.
\end{proposition}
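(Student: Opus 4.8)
The plan is to establish the metric axioms first, reusing the mechanism of Theorem~2.1, and then completeness.

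\emph{Symmetry and triangle inequality.} Symmetry is immediate, since $d$ is symmetric and the coordinate flip is a bijection of $\Pi(\mu,\nu)$ onto $\Pi(\nu,\mu)$. For the triangle inequality I would argue exactly as in the proof of Theorem~2.1, with couplings in place of functions. Given $\mu_1,\mu_2,\mu_3$ and $r>W_\vartheta(\mu_1,\mu_2)$, $s>W_\vartheta(\mu_2,\mu_3)$, pick couplings $q_{12},q_{23}$ with $\int c_r\,dq_{12}\le1$ and $\int c_s\,dq_{23}\le1$. The gluing lemma (\cite{Villani2}) yields a measure on $X\times X\times X$ having $q_{12}$ and $q_{23}$ as its consecutive two-dimensional marginals; let $q_{13}$ be its first-and-third marginal. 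Subadditivity of the concave $\psi$ gives $\psi(d(x_1,x_3))\le\psi(d(x_1,x_2))+\psi(d(x_2,x_3))$ on the triple space, and then convexity of $\varphi$ with weights $r/t,s/t$ for $t:=r+s$ gives, after integration, $\int c_t\,dq_{13}\le\frac rt+\frac st=1$. Hence $W_\vartheta(\mu_1,\mu_3)\le r+s$, and taking infima proves subadditivity.

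\emph{Separation of points.} The implication $\mu=\nu\Rightarrow W_\vartheta=0$ is trivial via the diagonal coupling. Conversely, if $W_\vartheta(\mu,\nu)=0$ choose $q_n\in\Pi(\mu,\nu)$ with $\int c_{1/n}\,dq_n\le1$. As the marginals are fixed, $\Pi(\mu,\nu)$ is tight, so a subsequence converges weakly to some $q\in\Pi(\mu,\nu)$. Since $t\mapsto c_t$ is decreasing and $c_t$ is nonnegative and lower semicontinuous, $\int c_{1/m}\,dq_{n_k}\le\int c_{1/n_k}\,dq_{n_k}\le1$ for $n_k\ge m$, whence $\int c_{1/m}\,dq\le1$ for every $m$. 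Letting $m\to\infty$, $c_{1/m}(x,y)=\varphi(m\,\psi(d(x,y)))\uparrow\infty$ wherever $d(x,y)>0$, so Fatou forces $q(\{d>0\})=0$; thus $q$ is carried by the diagonal and $\mu=\nu$.

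\emph{Completeness.} Let $(\mu_n)$ be Cauchy; passing to a subsequence (again denoted $\mu_k$) I may assume $W_\vartheta(\mu_k,\mu_{k+1})\le\epsilon_k$ with $\sum_k\epsilon_k<\infty$. Using the optimal couplings supplied by the previous proposition, the gluing lemma and the Kolmogorov extension theorem, I construct a probability space $(\Omega,\mathbb P)$ carrying $X$-valued random variables $(Y_k)$ with $\mathrm{law}(Y_k)=\mu_k$ and $(Y_k,Y_{k+1})$ optimal, so that $\mathbb E[\varphi(\frac1{\epsilon_k}\psi(Z_k))]\le1$ for $Z_k:=d(Y_k,Y_{k+1})$. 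The decisive point is that $\rho:=\psi\circ d$ is again a metric on $X$ (subadditivity of $\psi$ gives the triangle inequality), topologically equivalent to $d$ and complete, because $\psi$ is a homeomorphism of $\R_+$ fixing $0$. Jensen's inequality on the probability space gives $\mathbb E[\rho(Y_k,Y_{k+1})]=\mathbb E[\psi(Z_k)]\le\epsilon_k\,\varphi^{-1}(1)$, hence $\sum_k\rho(Y_k,Y_{k+1})<\infty$ almost surely; so $(Y_k)$ is $\rho$-Cauchy a.s. and converges a.s. to a measurable $Y_\infty$, and I set $\mu_\infty:=(Y_\infty)_*\mathbb P$.

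Finally, using $(Y_k,Y_\infty)$ as a coupling and the representation $d_\vartheta(f,g)=\|\psi(|f-g|)\|_{L^\varphi}$ proved above, $W_\vartheta(\mu_k,\mu_\infty)\le\|\psi(d(Y_k,Y_\infty))\|_{L^\varphi(\Omega,\mathbb P)}$. The $\rho$-triangle inequality yields the pointwise bound $\psi(d(Y_k,Y_\infty))=\rho(Y_k,Y_\infty)\le R_k:=\sum_{j\ge k}\rho(Y_j,Y_{j+1})$, and since the Luxemburg functional is monotone and satisfies $\|h_N\|_{L^\varphi}\uparrow\|h\|_{L^\varphi}$ for $h_N\uparrow h$ (monotone convergence in its defining integral), applying this to the partial sums of $R_k$ gives $\|R_k\|_{L^\varphi}\le\sum_{j\ge k}\|\psi(Z_j)\|_{L^\varphi}\le\sum_{j\ge k}\epsilon_j\to0$. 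Thus $W_\vartheta(\mu_k,\mu_\infty)\to0$, and, the sequence being Cauchy, it converges to $\mu_\infty$; moreover $\mu_\infty\in\mathcal P_\vartheta(X)$ since $W_\vartheta(\delta_y,\mu_\infty)\le W_\vartheta(\delta_y,\mu_k)+W_\vartheta(\mu_k,\mu_\infty)<\infty$. I expect the completeness step to be the main obstacle: the only available control is on $\psi(Z_k)$, and as $\psi$ is merely concave this need not bound $Z_k$ strongly enough to make $(Y_k)$ summably $d$-Cauchy; passing to the equivalent complete metric $\rho=\psi\circ d$ is what converts the $L^\varphi$-control into almost-sure $\rho$-summability.
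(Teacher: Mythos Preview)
Your triangle-inequality argument is essentially identical to the paper's (gluing plus concavity of $\psi$ plus convexity of $\varphi$), and your separation-of-points argument is a correct expansion of what the paper dismisses in one line.

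For completeness, both you and the paper rely on the same key observation, namely that $\rho:=\psi\circ d$ is a complete metric topologically equivalent to $d$, and both use Jensen to convert the $L^\varphi$-control into $L^1$-control of $\rho$. From there the routes diverge. The paper stays on the measure side: it observes that the Cauchy property implies $(\mu_k)$ is $W_1$-Cauchy over $(X,\rho)$, invokes completeness of $\mathcal P_1(X,\rho)$ to get a limit $\mu$, then uses tightness of the couplings $q_{n,k}$ (fixed marginals) and lower semicontinuity/Fatou to pass to the limit in the inequality $\int\varphi(\frac1{t_n}\psi\circ d)\,dq_{n,k}\le1$. You instead go probabilistic: you glue the optimal consecutive couplings into a process $(Y_k)$ via Kolmogorov, show $\sum_k\rho(Y_k,Y_{k+1})<\infty$ a.s.\ from summability of expectations, extract $Y_\infty$ pointwise, and then bound $W_\vartheta(\mu_k,\mu_\infty)$ by the Luxemburg norm of the tail sum. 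Your argument is self-contained in that it does not quote completeness of $\mathcal P_1$, but it pays for this by needing the infinite gluing (Kolmogorov on a Polish space) and the monotone-convergence property of the Luxemburg norm; the paper's version is shorter once the standard $\mathcal P_1$-toolkit is taken as known. Both are correct.
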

The triangle inequality for $W_\vartheta$ is valid not only on $\mathcal P_\vartheta(X)$ but on the whole space $\mathcal P(X)$ of probability measures on $X$. The triangle inequality implies that
$W_\vartheta(\mu,\nu)<\infty$ for all $\mu,\nu\in\mathcal P_\vartheta(X)$.

\begin{proof}
Given three probability measures $\mu_1,\mu_2,\mu_3$ on $X$ and numbers $r,s$ with
$W_\vartheta(\mu_1,\mu_2)<r$ and $W_\vartheta(\mu_2,\mu_3)<s$.
Then there exist a coupling $q_{12}$ of $\mu_1$ and $\mu_2$ and a coupling $q_{23}$ of $\mu_2$ and $\mu_3$ s.t.
$$\int\varphi\left(\frac1r\psi\circ d\right)\,dq_{12}\le1,\qquad \int\varphi\left(\frac1s\psi\circ d\right)\,dq_{23}\le1.$$
Let $q_{123}$ be the gluing of the two couplings $q_{12}$ and $q_{23}$, see e.g. \cite{Dudley}, Lemma 11.8.3.
That is, $q_{123}$ is a probability measure on $X\times X\times X$ s.t. the projection onto the first two factors coincides with $q_{12}$ and the projection onto the last two factors coincides with $q_{23}$.
Let $q_{13}$ denote the projection of $q_{123}$ onto the first and third factor. In particular, this will be a coupling of $\mu_1$ and $\mu_3$. Then for $t:=r+s$
\begin{eqnarray*}
\lefteqn{\int_{X\times X}\varphi\left(\frac1t\psi(d(x,z))\right)\,dq_{13}(x,z)}\\
&\le&
\int_{X\times X\times X}\varphi\left(\frac1t\psi(d(x,y)+d(y,z))\right)\,dq_{123}(x,y,z)\\
&\le&
\int_{X\times X\times X}\varphi\left(\frac rt\frac{\psi(d(x,y))}r+\frac st\frac{\psi(d(y,z))}s\right)\,dq_{123}(x,y,z)\\
&\le&
\frac rt\int_{X\times X\times X}\varphi\left(\frac{\psi(d(x,y))}r\right)\,dq_{123}(x,y,z)+
\frac st\int_{X\times X\times X}\varphi\left(\frac{\psi(d(y,z))}s\right)\,dq_{123}(x,y,z)\\
&\le&\frac rt\cdot 1+\frac st\cdot 1=1.
\end{eqnarray*}
Hence, $W_\vartheta(\mu_1,\mu_3)\le t$. This proves the triangle inequality.

\medskip

To prove completeness, assume that $(\mu_k)_k$ is a $W_\vartheta$-Cauchy sequence, say $W_\vartheta(\mu_n,\mu_k)\le t_n$ for all $k\ge n$ with $t_n\to0$ as $n\to\infty$. Then there exist couplings $q_{n,k}$ of $\mu_n$ and $\mu_k$ s.t.
\begin{equation}\label{cauchy}
\int\varphi\left(\frac1{t_n}\psi(d(x,y))\right)\,dq_{n,k}(x,y)\le1.\end{equation}
Jensen's inequality implies
$$\int \tilde d(x,y)\,dq_{n,k}(x,y)\le t_n\cdot\varphi^{-1}(1)$$
with $\tilde d(x,y):=\psi(d(x,y))$. The latter is a complete metric on $X$ with the same topology as $d$.
That is, $(\mu_k)_k$ is a Cauchy sequence w.r.t. the $L^1$-Wasserstein distance on $\mathcal P(X,\tilde d)$.
Because of completeness of $\mathcal P_1(X,\tilde d)$, we thus obtian an accumulation point $\mu$ and a converging subsequence $({\mu_k}_i)_i$. According to \cite{Villani2}, Lemma 4.4, this also yields an accumulation point $q_n$ of the sequence $(q_{n,k_i})_i$.
Continuity of the involved cost functions -- together with Fatou's lemma -- allows to pass to the limit in
(\ref{cauchy})
to derive
$$\int\varphi\left(\frac1{t_n}\psi(d(x,y))\right)\,dq_{n}(x,y)\le1$$
which proves that $W_\vartheta(\mu,\mu_n)\le t_n\to0$ as $n\to\infty$.

\medskip

With a similar argument, one verifies that $W_\vartheta(\mu,\nu)=0$ if and only if $\mu=\nu$.
\end{proof}

\begin{remark}
For each pair of probability measures $\mu,\nu$ on $X$
$$W_\vartheta(\mu,\nu)\le 1\quad\Longleftrightarrow\quad
\inf_{q\in\Pi(\mu,\nu)}\int_{X\times X} \vartheta(d(x,y))\, dq(x,y)\le 1.$$
\end{remark}

\end{document}